\numberwithin{equation}{section}
\theoremstyle{plain}
\newtheorem{theorem}{Theorem}[section]
\newtheorem{lemma}[theorem]{Lemma}
\theoremstyle{definition}
\theoremstyle{remark}
\newtheorem{case[theorem]}{Case}
\title{On the quotient set of the distance set}
\author{A. Iosevich, D. Koh and H. Parshall}
\thanks{Key words and phrases: Quotient set, distance set, finite field}
\subjclass[2000]{11T24, 52C17}
\begin{document}

\begin{abstract} Let ${\Bbb F}_q$ be a finite field of order $q.$ We prove that if $d\ge 2$ is even and $E \subset {\Bbb F}_q^d$ with $|E| \ge 9q^{\frac{d}{2}}$ then 
$$ {\Bbb F}_q=\frac{\Delta(E)}{\Delta(E)}=\left\{ \frac{a}{b}: a \in \Delta(E), b \in \Delta(E) \backslash \{0\} \right\},$$ where 
$$ \Delta(E)=\{||x-y||: x,y \in E\}, \ ||x||=x_1^2+x_2^2+\cdots+x_d^2.$$ 
If the dimension $d$ is odd and $E\subset \mathbb F_q^d$ with $|E|\ge 6q^{\frac{d}{2}},$ then 
$$ \{0\}\cup\mathbb F_q^+ \subset \frac{\Delta(E)}{\Delta(E)},$$
where $\mathbb F_q^+$ denotes the set of nonzero quadratic residues in $\mathbb F_q.$ Both results are, in general, best possible, including the conclusion about the nonzero quadratic residues in odd dimensions. 
\end{abstract} 
\maketitle
%\vspace{.4in}
%\tableofcontents
%\setstretch{1.25}

\section{Introduction}

The Erd\H os-Falconer distance problem in vector spaces over finite fields asks for the smallest possible size of 
$$\Delta(E) = \{||x-y||: x,y \in E\}, \ ||x||=x_1^2+\dots+x_d^2,$$ given $E \subset {\Bbb F}_q^d$, $d \ge 2$.  This problem was introduced by Bourgain, Katz and Tao in \cite{BKT04}. Here ${\Bbb F}_q$ denotes the finite field with $q$ elements and ${\Bbb F}_q^d$ is the $d$-dimensional vector space over this field. 

In \cite{IR07}, the first listed author and Misha Rudnev proved that if $E \subset {\Bbb F}_q^d$, $d \ge 2$, with $|E|>2q^{\frac{d+1}{2}}$, then $\Delta(E)={\Bbb F}_q$.  Hart, the first two listed authors, and Rudnev~\cite{HIKR11} showed that, in a sense, this result is best possible when $d$ is odd.  More precisely, for any $c \in (0,1)$ and any $q$ sufficiently large with respect to $c$, they construct subsets $E \subseteq \mathbf{F}_q^d$ with $|E| > \frac{c}{2}q^\frac{d + 1}{2}$ but $|\Delta(E)| < cq$. This construction does not appear to generalize to the even dimensional case. In \cite{CEHIK10}, Chapman, the first listed author, Erdo\u{g}an, Hart and the second listed author proved that if $q$ is prime, $q \equiv 3 \mod 4$ and 
if $E\subset \mathbb F_q^2$ with $|E| \ge Cq^{\frac{4}{3}}$ for a sufficiently large constant $C>0$, then 
$$|\Delta(E)|>\frac{q}{2}.$$  This result was extended to two dimensional vector spaces over arbitrary finite fields in \cite{BHIPR17}. In even dimensions $d \geq 2$, it is reasonable to conjecture that if $|E| \geq Cq^{\frac{d}{2}}$ with a sufficiently large $C$, then $|\Delta(E)| > \frac{1}{2}q$, but this conjecture currently remains open. The exponent $\frac{d}{2}$ cannot be improved. To see this, let $q=p^2$, $p$ prime, and let $E={\Bbb F}_p^d \subset {\Bbb F}_q^d$. Then $|E|=q^{\frac{d}{2}}$, yet $\Delta(E)={\Bbb F}_p$. When $q$ is a prime and $d \ge 4$, the sharpness of $\frac{d}{2}$ can be demonstrated using Lagrangian subspaces (\cite{HIKR11}). In two dimensions, the sharpness of $\frac{d}{2}=1$ is easily demonstrated by taking a suitable subset of a straight line. 

The purpose of this paper is to show that under the assumption $|E| \ge Cq^{\frac{d}{2}}$, taking the quotient of the elements of $\Delta(E)$ recovers all of ${\Bbb F}_q$ for $d$ even, and at least all the square elements of ${\Bbb F}_q$ when $d$ is odd. More precisely, for $E \subseteq \mathbf{F}_q^d$ we define
\[
	\frac{\Delta(E)}{\Delta(E)} := \left\{ \frac{a}{b}: a \in \Delta(E), \ b \in \Delta (E) \backslash \{0\} \right\}.
\]

\vskip.125in 

Our main results are the following. 

\begin{theorem}\label{main} 
Let $E \subset {\Bbb F}_q^d$ $d$ even. Then if $|E| \ge 9q^{\frac{d}{2}}$ , we have 
$$ {\Bbb F}_q=\frac{\Delta(E)}{\Delta(E)}.$$ 
\end{theorem}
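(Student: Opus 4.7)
The plan is to show, for every $r \in \mathbb{F}_q$, the existence of $a \in \Delta(E)$ and $b \in \Delta(E)\setminus\{0\}$ with $a = rb$. The case $r = 0$ is immediate: $\|x-x\| = 0 \in \Delta(E)$, and every affine translate of a totally isotropic subspace of $\mathbb{F}_q^d$ has at most $q^{d/2}$ points, so the hypothesis $|E|\geq 9q^{d/2}$ forces a nonzero distance in $\Delta(E)$. For $r \neq 0$, set $\nu(t) = |\{(x,y)\in E^2 : \|x-y\|=t\}|$; then it suffices to show
$$
M_r \;:=\; \sum_{t \in \mathbb{F}_q\setminus\{0\}} \nu(t)\,\nu(rt) \;>\; 0,
$$
since any nonzero $t$ contributing to $M_r$ gives $t, rt \in \Delta(E)$, whence $r = (rt)/t \in \Delta(E)/\Delta(E)$.

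To analyze $M_r$ I would apply Fourier analysis on $\mathbb{F}_q^d$. Expanding $\mathbf{1}_{\|u\|=t} = q^{-1}\sum_s \chi(s(\|u\|-t))$ for a nontrivial additive character $\chi$, together with the Gauss-sum evaluation
$$
\sum_{u \in \mathbb{F}_q^d}\chi\bigl(s\|u\|+\xi\cdot u\bigr) \;=\; \varepsilon\,q^{d/2}\,\chi\!\left(-\tfrac{\|\xi\|}{4s}\right)\qquad(s \neq 0),
$$
valid in even dimensions with $\varepsilon = \eta((-1)^{d/2})\in\{\pm 1\}$ (for $\eta$ the quadratic character), a standard computation yields
$$
\sum_{t \in \mathbb{F}_q}\nu(t)\nu(rt) \;=\; \frac{|E|^4}{q} \;+\; \frac{A_r}{q^d} \;-\; q^{d-1}|E|^2,\qquad A_r := \!\!\sum_{\|\eta\|=r\|\xi\|}\!\!|\widehat{E}(\xi)|^2|\widehat{E}(\eta)|^2,
$$
while the same Fourier machinery applied to the isotropic cone $\{u:\|u\|=0\}\subset\mathbb{F}_q^d$ (whose cardinality is $q^{d-1}+\varepsilon(q-1)q^{d/2-1}$ in even dimensions) gives
$$
\nu(0) \;=\; \frac{|E|^2}{q} \;+\; \varepsilon\,\frac{T-q^{d-1}|E|}{q^{d/2}},\qquad T := \sum_{\|\xi\|=0}|\widehat{E}(\xi)|^2.
$$
Then $M_r$ equals the first display minus $\nu(0)^2$.

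The main obstacle is controlling $\nu(0)^2$: the Plancherel bound $|T-q^{d-1}|E||\leq(q-1)q^{d-1}|E|$ yields $\nu(0)\lesssim|E|^2/q + q^{d/2}|E|$, and the square of the second summand can reach order $q^d|E|^2$, which swamps the leading $|E|^4/q$ once $|E|\asymp q^{d/2}$. The key observation is that every pair $(\xi,\eta)$ with $\|\xi\|=\|\eta\|=0$ automatically satisfies $\|\eta\|=r\|\xi\|$, so $A_r\geq T^2$, which feeds a positive term $T^2/q^d$ back into $M_r$ at exactly the scale of the problematic part of $\nu(0)^2$. Writing $T = q^{d-1}|E|+R$ and substituting $A_r \geq T^2$, the quadratic-in-$R$ contributions from $T^2/q^d$ and $\nu(0)^2$ cancel exactly, leaving
$$
M_r \;\geq\; \frac{(q-1)|E|^2\bigl(|E|^2 - q^d\bigr)}{q^2} \;+\; \frac{2|E|R}{q}\left(1 - \frac{\varepsilon|E|}{q^{d/2}}\right).
$$
The first summand is positive whenever $|E|^2 > q^d$, and bounding the remaining cross term via $|R|\leq(q-1)q^{d-1}|E|$ produces, after numerical balancing uniform in $q$, the explicit threshold $|E|\geq 9q^{d/2}$.
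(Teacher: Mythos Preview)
Your proposal is correct and follows essentially the same approach as the paper: both reduce to showing $\sum_{t}\nu(t)\nu(rt)>\nu(0)^2$, both compute these quantities via Fourier/Gauss sums, and both hinge on the identical key observation that the dangerous term $(\sum_{\|\xi\|=0}|\widehat{E}(\xi)|^2)^2$ appearing in $\nu(0)^2$ is exactly absorbed by the diagonal contribution $A_r\ge T^2$ coming from pairs with $\|\xi\|=\|\eta\|=0$. Your organization is somewhat more streamlined—you reach the closed form $\sum_t\nu(t)\nu(rt)=q^{-1}|E|^4+q^{-d}A_r-q^{d-1}|E|^2$ directly via one character-sum computation, whereas the paper routes through the four-term decomposition $I+II+III+IV$ and separate sphere lemmas—but the substance and the crucial cancellation are the same; your crude bound $|R|\le (q-1)q^{d-1}|E|$ in fact yields a threshold around $3q^{d/2}$, comfortably inside the stated $9q^{d/2}$.
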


\begin{theorem} \label{mainOdd}  Let $d\ge 3$ be an odd integer and $E\subset \mathbb F_q^d.$ Then if  $|E|\ge 6q^{\frac{d}{2}}$,  we have
$$ \{0\}\cup\mathbb F_q^+ \subset \frac{\Delta(E)}{\Delta(E)}.$$
\end{theorem}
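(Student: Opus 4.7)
The plan is, for each $t \in \{0\} \cup \mathbb F_q^+$, to bound from below the counting function
$$
N_t := \sum_{r \in \mathbb F_q \setminus \{0\}} \nu(r)\,\nu(tr), \qquad \nu(r) := |\{(x,y) \in E^2 : \|x-y\| = r\}|,
$$
and to conclude $t \in \Delta(E)/\Delta(E)$ whenever $N_t > 0$. For $t \ne 0$ this is immediate: any quadruple $(x_1,y_1,x_2,y_2) \in E^4$ counted by $N_t$ satisfies $\|x_1-y_1\|/\|x_2-y_2\| = t$ with denominator nonzero. The case $t = 0$ amounts to $\Delta(E) \supsetneq \{0\}$, which will fall out of the Fourier bound on $\nu(0)$ produced along the way.

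Following the Fourier template used in \cite{IR07} and mirroring the proof of Theorem~\ref{main}, I would decompose $\nu(r) = |E|^2/q + \mathcal R(r)$ using the orthogonality $\mathbf 1_{\{r\}}(a) = q^{-1}\sum_s \chi(s(a-r))$ together with the Gauss sum identity
$$
\sum_{x \in \mathbb F_q^d} \chi(s\|x\| + x\cdot\xi) = \eta(s)^d G^d\, \chi\!\left(-\|\xi\|/(4s)\right) \qquad (s \ne 0),
$$
where $\eta$ is the quadratic character and $|G|^2 = q$. Since $\sum_r \mathcal R(r) = 0$ and $t \ne 0$, the cross terms in $\sum_r \nu(r)\nu(tr)$ vanish, giving
$$
\sum_{r \in \mathbb F_q} \nu(r)\nu(tr) = \frac{|E|^4}{q} + \sum_{r \in \mathbb F_q} \mathcal R(r)\mathcal R(tr).
$$

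The crucial step, and the main technical obstacle, is the explicit evaluation of $\sum_r \mathcal R(r)\mathcal R(tr)$ in odd dimension. Here the factor $\eta(s)^d = \eta(s)$ does not trivialise as in the even case, and the combination $\eta(s)\eta(-st) = \eta(-1)\eta(t)$, the elementary identity $\sum_{s \ne 0} \chi(u/s) = q\mathbf 1[u=0] - 1$, and $G^{2d} = \eta(-1)\,q^d$ collapse the resulting double character sum to
$$
\sum_r \mathcal R(r)\mathcal R(tr) = \eta(t)\!\left(\frac{U_t}{q^d} - q^{d-1}|E|^2\right), \qquad U_t := \!\!\!\sum_{\|\xi'\| = t\|\xi\|} \!\!\!|\widehat E(\xi)|^2 |\widehat E(\xi')|^2 \ge 0.
$$
The explicit $\eta(t)$ is precisely what forces the theorem to split by quadratic residue class and accounts for the stated sharpness.

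For $t \in \mathbb F_q^+$ the sign $\eta(t) = 1$ makes $U_t/q^d$ a non-negative bonus, yielding $\sum_r \nu(r)\nu(tr) \ge |E|^4/q - q^{d-1}|E|^2$. Subtracting the $r=0$ contribution $\nu(0)^2$ and invoking the companion Fourier bound $\nu(0) \le |E|^2/q + q^{(d-1)/2}|E|$ (from the same computation specialised to $r=0$, using $\sum_\xi |\widehat E(\xi)|^2 = q^d|E|$), one lands at
$$
N_t \ge \frac{|E|^4}{q} - q^{d-1}|E|^2 - \left(\frac{|E|^2}{q} + q^{(d-1)/2}|E|\right)^{\!2},
$$
and a routine arithmetic check confirms that the right-hand side is positive once $|E| \ge 6 q^{d/2}$ (with $q$ odd). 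The same bound on $\nu(0)$ rules out the degenerate scenario $\nu(0) = |E|^2$, giving $\Delta(E) \ne \{0\}$ and hence $0 \in \Delta(E)/\Delta(E)$.
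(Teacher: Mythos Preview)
Your argument is correct and follows the same overall Fourier-analytic strategy as the paper: reduce to $\sum_r \nu(r)\nu(tr) > \nu(0)^2$ for each square $t$, and extract the $\eta(t)$ obstruction from the Gauss-sum evaluation $G^{2d}\eta(-1)=q^d$ together with $\eta(s)\eta(-st)=\eta(-1)\eta(t)$. The packaging is different in one genuinely useful way. The paper splits $\nu(t)=q^{-d}|E|^2|S_t|+(\text{Fourier tail})$ and must then estimate four pieces $\mathrm I$--$\mathrm{IV}$; because $|S_t|$ is not constant in $t$, the cross terms $\mathrm{II},\mathrm{III}$ do not vanish and are each bounded crudely by $-|E|^3$, costing an extra $2|E|^3$ in the final inequality (and forcing a separate appeal to \cite{IR07} for $q\le 4$). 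You instead split off the truly constant main term $|E|^2/q$, so the cross terms vanish identically and everything collapses to the single identity $\sum_r \mathcal R(r)\mathcal R(tr)=\eta(t)\bigl(q^{-d}U_t-q^{d-1}|E|^2\bigr)$. This gives the sharper lower bound $\sum_r \nu(r)\nu(tr)\ge q^{-1}|E|^4-q^{d-1}|E|^2$ (versus the paper's $q^{-1}|E|^4-2|E|^3-q^{d-1}|E|^2$), and the arithmetic check for the constant $6$ then goes through directly for every odd $q\ge 3$ with no small-$q$ case needed. The underlying orthogonality computation is the same in substance; you just carry it out with the Gauss-sum form of $K(s)=\sum_{x,y}\chi(s\|x-y\|)$ rather than via the intermediate object $\widehat{S_t}(m)$.
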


\vskip.125in 

\subsection{Sharpness of results} The results are in general sharp up to constants. To see this, we once again take $q=p^2$ and $E={\Bbb F}_p^2$. Then $|E|=q^{\frac{d}{2}}$, yet 
$$ \left\{ \frac{a}{b}: a \in \Delta(E), \ b \in \Delta(E) \backslash \{0\} \right\}={\Bbb F}_p.$$ 

\vskip.125in 

The statement about the squares in Theorem \ref{mainOdd} is also sharp. The example in \cite{HIKR11} (page 15) that illustrates the sharpness of the exponent $\frac{d+1}{2}$ yields a set of size $cq^{\frac{d+1}{2}}$, with $c$ sufficiently small, such that $\Delta(E) \subset \{{(a-a')}^2: a,a' \in A\}$, where $A$ is a suitable arithmetic progression in ${\Bbb F}_q$. In particular, $\Delta(E)$ is a subset of the squares, so the ratios of the elements of $\Delta(E)$ are also squares. 

\vskip.25in 

\section{Proof of Theorem \ref{main}}

\vskip.125in 

For $t\in \mathbb F_q$, let 
$$ \nu(t)=\sum_{x,y\in \mathbb F_q^d} E(x)E(y)S_t(x-y),$$ where 
$$ S_t=\{x \in {\Bbb F}_q^d: ||x||=t\}.$$ 
It is clear that $\displaystyle 0\in \frac{\Delta(E)}{\Delta(E)}$ unless $\Delta(E)=\{0\}.$ Thus it suffices to prove that for each $r\ne 0$ there exists $t\in \Delta(E)\setminus\{0\}$ such that $tr\in \Delta(E).$
Since $t\in \Delta(E)$ if and only if $\nu(t)>0$, we must show that for any $r \in {\Bbb F}_q^*$, 
\begin{equation} \label{mama} \nu^2(0)<\sum_{t\in \mathbb F_q} \nu(t) \nu(rt). \end{equation} 

We shall need the following standard Fourier analytic preliminaries. Given $f: {\Bbb F}_q^d \to {\mathbb C}$, define the Fourier transform $\widehat{f}$ by the formula 
$$ \widehat{f}(m)=q^{-d} \sum_{x \in {\Bbb F}_q^d} \chi(-x \cdot m) f(x),$$ where $\chi$ is a non-trivial principal character on ${\Bbb F}_q$. We shall use the following calculation repeatedly. 
\begin{lemma} \label{fourier} With the notation above, 
$$ \text{(FOURIER INVERSION)} \ f(x)=\sum_{m \in {\Bbb F}_q^d} \chi(x \cdot m) \widehat{f}(m) $$ and 
$$ \text{(PLANCHEREL)} \ \sum_{m \in {\Bbb F}_q^d} {|\widehat{f}(m)|}^2=q^{-d} \sum_{x \in {\Bbb F}_q^d} {|f(x)|}^2.$$ 
\end{lemma}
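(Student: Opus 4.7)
The plan is to derive both identities from a single orthogonality relation for the additive characters of $\mathbb{F}_q^d$, namely
\[
	\sum_{m \in \mathbb{F}_q^d} \chi(z \cdot m) = q^d \cdot \mathbf{1}_{z = 0},
\]
valid for $z \in \mathbb{F}_q^d$. To justify this I would argue coordinate by coordinate: on each coordinate the one-dimensional sum $\sum_{m_i \in \mathbb{F}_q} \chi(z_i m_i)$ equals $q$ when $z_i = 0$, and otherwise the substitution $m_i \mapsto m_i + 1$ shows that the sum equals itself times $\chi(z_i)$, forcing it to vanish because $\chi$ is a nontrivial character. Multiplying the one-dimensional identities yields the $d$-dimensional form.

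For \textbf{Fourier inversion}, I would substitute the definition of $\widehat{f}$ directly, swap the order of summation, and collapse the inner sum using the orthogonality relation with $z = x - y$:
\[
	\sum_{m \in \mathbb{F}_q^d} \chi(x \cdot m)\widehat{f}(m)
	= q^{-d} \sum_{y \in \mathbb{F}_q^d} f(y) \sum_{m \in \mathbb{F}_q^d} \chi((x - y) \cdot m)
	= \sum_{y \in \mathbb{F}_q^d} f(y)\, \mathbf{1}_{y = x} = f(x).
\]

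For \textbf{Plancherel}, I would expand $|\widehat{f}(m)|^2 = \widehat{f}(m)\overline{\widehat{f}(m)}$, noting that $\overline{\chi(-y\cdot m)} = \chi(y \cdot m)$ since $\chi$ takes values in the roots of unity, swap summation so that the sum over $m$ is innermost, and apply orthogonality with $z = y - x$. Only the diagonal $x = y$ survives, giving
\[
	\sum_{m} |\widehat{f}(m)|^2 = q^{-2d} \sum_{x,y} f(x)\overline{f(y)} \sum_m \chi((y-x)\cdot m) = q^{-d} \sum_x |f(x)|^2.
\]
There is no real obstacle here: the entire argument rests on the orthogonality identity, and the only small care needed is in handling complex conjugation of $\chi$ correctly in the Plancherel computation.
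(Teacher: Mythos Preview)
Your argument is correct and is the standard derivation; the paper itself states this lemma without proof, treating Fourier inversion and Plancherel as background facts, so there is nothing to compare against.

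One small technical remark on your justification of the orthogonality relation: the shift $m_i \mapsto m_i + 1$ gives $S = \chi(z_i)\,S$, but for a general finite field $\mathbb{F}_q$ with $q$ not prime it can happen that $\chi(z_i)=1$ even though $z_i\neq 0$ (e.g.\ when $\chi$ is the trace character and $z_i$ lies in the kernel of the trace). The clean fix is either to shift by an arbitrary $c$ and then choose $c$ so that $\chi(z_i c)\neq 1$ (possible since $z_i\neq 0$ and $\chi$ is nontrivial), or simply to observe that $m_i \mapsto \chi(z_i m_i)$ is itself a nontrivial additive character when $z_i\neq 0$ and hence sums to zero. With that adjustment your proof is complete.
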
 

\vskip.125in 

By Fourier inversion, 
\begin{align*} \nu(t)&= q^{2d}\sum_{m\in \mathbb F_q^d} \widehat{S_t}(m)|\widehat{E}(m)|^2\\
&=q^{-d}{|E|}^2 |S_t|+q^{2d}\sum_{m\ne \vec{0}} \widehat{S_t}(m)|\widehat{E}(m)|^2.\end{align*} 

It follows that for $r\in \mathbb F_q^*$,
\begin{equation}\label{formulasumproduct} \sum_{t\in \mathbb F_q} \nu(t)\nu(rt)\end{equation}
$$ =\sum_{t\in \mathbb F_q} \left(q^{-d}{|E|}^2 |S_t|+q^{2d}\sum_{m\ne \vec{0}} \widehat{S}_t(m)|\widehat{E}(m)|^2  \right) \left(q^{-d}{|E|}^2 |S_{rt}|+q^{2d}\sum_{m'\ne \vec{0}} \widehat{S}_{rt}(m')|\widehat{E}(m')|^2  \right)  $$
$$ = q^{-2d}|E|^4 \sum_{t\in \mathbb F_q}|S_t||S_{rt}|  + q^d|E|^2 \sum_{m'\ne \vec{0}} |\widehat{E}(m')|^2 \sum_{t\in \mathbb F_q} |S_t| \widehat{S}_{rt}(m') $$
$$ +q^d|E|^2 \sum_{m\ne \vec{0}} |\widehat{E}(m)|^2 \sum_{t\in \mathbb F_q} |S_{rt}| \widehat{S}_{t}(m)
+ q^{4d} \sum_{m,m'\ne \vec{0}} |\widehat{E}(m)|^2|\widehat{E}(m')|^2 \sum_{t\in \mathbb F_q} \widehat{S_{t}}(m)\widehat{S}_{rt}(m')$$
$$:= I + II + III +IV.$$

%It follows that 
%$$ \sum_t \nu(t) \nu(rt)$$
%$$=\sum_t \left( {|E|}^2 |S_t|q^{-2}+q^4 \sum_{m \not=(0,0)} {|\widehat{E}(m)|}^2 \widehat{S}_t(m) \right) \cdot \left( {|E|}^2 |S_{rt}|q^{-2}+q^4 \sum_{m \not=(0,0)} {|\widehat{E}(m)|}^2 \widehat{S}_{rt}(m) \right)$$
%$$=\sum_t {|E|}^4|S_t||S_{rt}|q^{-4}+{|E|}^2q^{-2}q^4 \sum_{m \not=(0,0)} {|\widehat{E}(m)|}^2 \sum_t |S_t| \cdot \widehat{S}_{rt}(m)$$
%$$+{|E|}^2 q^{-2}q^4 \sum_{m \not=(0,0)} {|\widehat{E}(m)|}^2 \sum_t |S_{rt}| \cdot \widehat{S}_t(m)+q^8 \sum_{m,m' \not=(0,0)} {|\widehat{E}(m)|}^2 {|\widehat{E}(m')|}^2 \sum_t \widehat{S}_t(m) \widehat{S}_t(m')$$
%$$=I+II+III+IV.$$ 

We shall invoke the explicit value of $|S_t|$ which can be deduced by Theorem 6.26 in \cite{LN97}.
\begin{lemma}\label{sizeSt} Let $S_t \subset \mathbb F_q^d$ denote the sphere with radius $t\in \mathbb F_q.$ Then if $d\ge 2$ is even, 
$$ |S_t|=q^{d-1} + \lambda(t) q^{\frac{d-2}{2}} \eta\left((-1)^{d/2}\right),$$
where $\eta$ is the quadratic character of $\mathbb F_q^*,$  $\lambda(t)=-1 $ for $t\in \mathbb F_q^*,$ and $\lambda(0)=q-1.$\\
%On the other hand, if $d\ge 3$ is odd, then 
%$$|S_t|=q^{d-1} + q^{\frac{d-1}{2}} \eta\left((-1)^{(d-1)/2} t\right).$$
\end{lemma}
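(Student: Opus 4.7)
The plan is a standard orthogonality-plus-Gauss-sum computation; the key input, which also underlies Theorem 6.26 of \cite{LN97}, is the classical evaluation of the quadratic Gauss sum. First I would express the indicator of $\|x\|=t$ via additive character orthogonality to write
\[
|S_t| \;=\; q^{-1} \sum_{s \in \mathbb F_q} \chi(-st) \sum_{x \in \mathbb F_q^d} \chi\bigl(s\|x\|\bigr).
\]
The $s=0$ contribution is $q^{d-1}$, which is the main term. For $s\ne 0$, the additive decoupling $\|x\|=x_1^2+\cdots+x_d^2$ lets me factor the inner sum as a $d$-fold product of one-variable sums $\sum_{u\in\mathbb F_q}\chi(s u^2) = \eta(s)\,G$, where $G:=\sum_{u}\chi(u^2)$ is the standard quadratic Gauss sum.

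Next I would exploit that $d$ is even. Then $\eta(s)^d = 1$, and the classical identity $G^2 = \eta(-1)\,q$ gives $G^d = \eta\bigl((-1)^{d/2}\bigr)\,q^{d/2}$. Substituting back collapses the $s\ne 0$ contribution to
\[
q^{d/2-1}\,\eta\bigl((-1)^{d/2}\bigr) \sum_{s\ne 0} \chi(-st).
\]
The inner character sum is $q-1$ when $t=0$ and $-1$ when $t\ne 0$, which is exactly the definition of $\lambda(t)$. Combining with the main term and rewriting $q^{d/2-1} = q^{(d-2)/2}$ yields the claimed formula.

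There is no serious obstacle; the only delicate bookkeeping is verifying the sign identity $\eta(-1)^{d/2} = \eta\bigl((-1)^{d/2}\bigr)$ and handling the $t=0$ case separately. The substantive ingredient is $G^2 = \eta(-1)\,q$, a standard fact about quadratic Gauss sums over a finite field of odd characteristic, which is why the authors simply cite \cite{LN97}.
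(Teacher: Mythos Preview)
Your argument is correct and is the standard orthogonality/Gauss-sum computation for counting points on a quadric. Note that the paper does not actually supply a proof of this lemma; it simply cites Theorem~6.26 of \cite{LN97}, so you are filling in the omitted details rather than offering an alternative route, and your sketch matches the classical derivation underlying that reference.
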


 We also use the following result which was given as Lemma 4 in \cite{IK10}.
\begin{lemma}\label{Lem1}
Let $S_j$ be a sphere in ${\Bbb F}_q^d$, $d \ge 2$. 
Then for any $m\in {\mathbb F}_q^d,$ we have
\[ \widehat{S_j}(m) = q^{-1} \delta_0(m) + q^{-d-1}\eta^d(-1) G^d \sum_{s \in {\mathbb F}_q^*} 
\eta^d(s)\chi\Big(js+ \frac{\|m\|}{4s}\Big),\]
where $G$ denotes the Gauss sum, $\eta$ is the quadratic character of $\mathbb F_q^*,$ and $\delta_0(m)=1$ if $m=(0, \ldots, 0)$ and $\delta_0(m)=0$ otherwise.
\end{lemma}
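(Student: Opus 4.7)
The plan is to compute $\widehat{S_j}$ directly by expanding the indicator function of the sphere through additive character orthogonality, and then reducing the resulting sum to a product of one-dimensional quadratic Gauss sums.

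I would start from the orthogonality identity
\[
S_j(x) = q^{-1}\sum_{s \in {\Bbb F}_q} \chi(s(\|x\|-j)),
\]
valid because the inner sum equals $q$ when $\|x\|=j$ and $0$ otherwise. Substituting this into $\widehat{S_j}(m) = q^{-d}\sum_{x}\chi(-x\cdot m) S_j(x)$ and interchanging the order of summation yields
\[
\widehat{S_j}(m) = q^{-d-1} \sum_{s \in {\Bbb F}_q} \chi(-sj) \sum_{x \in {\Bbb F}_q^d} \chi(s\|x\| - x\cdot m).
\]
The $s=0$ term is just $q^{-d-1}\sum_x \chi(-x\cdot m)$, which by character orthogonality on ${\Bbb F}_q^d$ contributes exactly $q^{-1}\delta_0(m)$, matching the first term of the claimed formula.

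For $s \neq 0$, the quadratic form splits across coordinates, so the inner sum factors as $\prod_{i=1}^d \sum_{x_i \in {\Bbb F}_q}\chi(sx_i^2 - m_i x_i)$. Completing the square in each factor gives $sx_i^2 - m_i x_i = s(x_i - m_i/(2s))^2 - m_i^2/(4s)$, and the standard quadratic Gauss sum identity $\sum_{y}\chi(sy^2)=\eta(s)G$ (valid for $s\neq 0$) then collapses each one-variable sum to $\eta(s)G\,\chi(-m_i^2/(4s))$. Taking the product over $i$ produces $\eta^d(s)G^d\,\chi(-\|m\|/(4s))$, so the $s\neq 0$ contribution becomes
\[
q^{-d-1}G^d\sum_{s\neq 0}\eta^d(s)\,\chi\!\left(-sj - \tfrac{\|m\|}{4s}\right).
\]

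To reach the stated form, I would perform the change of variable $s \mapsto -s$, which produces a factor $\eta^d(-1)$ and flips the sign inside $\chi$, giving precisely $q^{-d-1}\eta^d(-1)G^d\sum_{s\neq 0}\eta^d(s)\chi(sj+\|m\|/(4s))$. The main obstacle I anticipate is purely bookkeeping: tracking the powers of $q$, the parity of the exponent on $\eta$, and the sign produced by the final substitution, since the invocation of the quadratic Gauss sum and the orthogonality of $\chi$ are routine.
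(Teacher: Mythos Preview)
Your proof is correct: the orthogonality expansion of the indicator, the separation of the $s=0$ term, the factorisation into one-variable quadratic Gauss sums via completing the square, and the final substitution $s\mapsto -s$ are all carried out accurately. The paper itself does not supply a proof of this lemma but simply quotes it from \cite{IK10}; the argument there is exactly the one you give, so there is nothing to compare.
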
 

\vskip.125in 
\subsection{A lower bound of $\displaystyle\sum_{t\in \mathbb F_q} \nu(t)\nu(rt)$ for even dimensions $d\ge 2$} Since $\displaystyle\sum_{t\in \mathbb F_q} \lambda(rt)=0$ for $r\ne 0$, it follows from Lemma \ref{sizeSt} that 
$$ I := q^{-2d}|E|^4 \sum_{t\in \mathbb F_q}|S_t||S_{rt}|
=q^{-2d}|E|^4 \left(q^{2d-1} + q^{d-2} \sum_{t\in \mathbb F_q} \lambda(t)\lambda(rt)\right)$$
$$=q^{-2d}|E|^4 \left(q^{2d-1} + q^{d-2} \lambda^2(0) + q^{d-2} \sum_{t\ne 0} \lambda(t)\lambda(rt)\right)$$ 
$$= q^{-2d}|E|^4 \left( q^{2d-1}+ q^{d-2} (q-1)^2 + q^{d-2} (q-1)\right).$$
Hence, we obtain
\begin{equation}\label{lowI}I= q^{-1} |E|^4 + q^{-d}|E|^4 - q^{-d-1}|E|^4.\end{equation}

In order to estimate the remaining terms, we need the following calculations. 
\begin{lemma} \label{sumsphere} Suppose that $m \not= \vec{0}$ in $\mathbb F_q^d, d\ge 2.$ Then for any $r\ne 0,$ we have 
\begin{equation}\label{eqze} \sum_{t\in\mathbb F_q} \widehat{S}_{rt}(m)=0\end{equation} 
and
\begin{equation}\label{eqq} \sum_{t\in \mathbb F_q} \lambda(t) \widehat{S_{rt}}(m)=q \widehat{S_0}(m),\end{equation}
where $\lambda(t)$ is defined as in Lemma \ref{sizeSt}.
\end{lemma}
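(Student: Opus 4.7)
The plan is to plug the explicit formula from Lemma~\ref{Lem1} into both sums, swap the order of summation so that the outer sum runs over the auxiliary variable $s\in\mathbb{F}_q^*$, and then reduce each inner sum over $t$ to a one-variable additive-character sum that can be evaluated by orthogonality. Since $m\ne\vec 0$, the $\delta_0(m)$ term in Lemma~\ref{Lem1} vanishes, so for every $j\in\mathbb{F}_q$ we have
\[
\widehat{S_j}(m)=q^{-d-1}\eta^d(-1)G^d\sum_{s\in\mathbb{F}_q^*}\eta^d(s)\,\chi\!\left(js+\frac{\|m\|}{4s}\right).
\]
This is the only ingredient beyond orthogonality that we will need.

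For the first identity, setting $j=rt$ and interchanging the sums gives
\[
\sum_{t\in\mathbb{F}_q}\widehat{S}_{rt}(m)=q^{-d-1}\eta^d(-1)G^d\sum_{s\in\mathbb{F}_q^*}\eta^d(s)\,\chi\!\left(\frac{\|m\|}{4s}\right)\sum_{t\in\mathbb{F}_q}\chi(rst).
\]
Because $r\ne 0$ and $s\ne 0$, the multiplier $rs$ is a nonzero element of $\mathbb{F}_q$, so $\sum_{t\in\mathbb{F}_q}\chi(rst)=0$ and the whole expression is zero, which proves \eqref{eqze}.

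For the second identity, the same manipulation reduces the claim to evaluating $\sum_{t\in\mathbb{F}_q}\lambda(t)\chi(rst)$ for each fixed $s\in\mathbb{F}_q^*$. Using $\lambda(0)=q-1$, $\lambda(t)=-1$ for $t\ne 0$, and again $\sum_{t\in\mathbb{F}_q}\chi(rst)=0$, this sum equals $(q-1)\chi(0)-\sum_{t\ne 0}\chi(rst)=(q-1)-(-1)=q$, independently of $s$. Therefore
\[
\sum_{t\in\mathbb{F}_q}\lambda(t)\widehat{S_{rt}}(m)=q\cdot q^{-d-1}\eta^d(-1)G^d\sum_{s\in\mathbb{F}_q^*}\eta^d(s)\,\chi\!\left(\frac{\|m\|}{4s}\right),
\]
and the right-hand side is precisely $q\,\widehat{S_0}(m)$ by the specialization $j=0$ of Lemma~\ref{Lem1} (the $\delta_0(m)$ term again drops out since $m\ne\vec 0$), establishing \eqref{eqq}.

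There is no real obstacle here; the only point requiring a bit of care is the evaluation of $\sum_t\lambda(t)\chi(rst)$, where one must remember that $\lambda$ is not the constant $-1$ but jumps to $q-1$ at the origin, and that this contribution is exactly what produces the factor $q$ on the right-hand side of \eqref{eqq}.
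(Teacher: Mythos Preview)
Your proof is correct, but it takes a different route from the paper's. The paper never invokes Lemma~\ref{Lem1} here. For \eqref{eqze} it works directly from the definition of the Fourier transform: swapping the sums over $t$ and $x$ and using that $\sum_{t\in\mathbb F_q} S_{rt}(x)=1$ (the spheres $\{S_{rt}\}_{t\in\mathbb F_q}$ partition $\mathbb F_q^d$) reduces the left-hand side to $q^{-d}\sum_{x}\chi(-x\cdot m)=0$. For \eqref{eqq} the paper simply expands $\lambda$ and applies \eqref{eqze}:
\[
\sum_{t}\lambda(t)\widehat{S_{rt}}(m)=(q-1)\widehat{S_0}(m)-\sum_{t\ne 0}\widehat{S_{rt}}(m)=(q-1)\widehat{S_0}(m)-\Big(\sum_{t}\widehat{S_{rt}}(m)-\widehat{S_0}(m)\Big)=q\,\widehat{S_0}(m).
\]
So the paper's argument is purely structural and makes no use of the Gauss-sum formula, whereas your argument routes everything through the explicit expression in Lemma~\ref{Lem1} and then collapses the $t$-sum by character orthogonality. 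Both are short; the paper's version has the advantage of being self-contained (it would work for any family partitioning $\mathbb F_q^d$), while yours shows that the identities are an immediate consequence of the explicit sphere transform already on hand.
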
 

To see this, observe that the left hand side of \eqref{eqze} equals 
$$ q^{-d} \sum_{t\in \mathbb F_q} \sum_{x\in \mathbb F_q^d} \chi(-x \cdot m) S_{rt}(x)=q^{-d} \sum_{x\in \mathbb F_q^d} \chi(-x \cdot m) \sum_{t\in \mathbb F_q} S_{rt}(x)$$
$$=q^{-d} \sum_{x\in \mathbb F_q^d} \chi(-x \cdot m)=0$$ since $m \not=(0,\ldots,0)$. Hence the equation \eqref{eqze} follows. 
By the definition of $\lambda(t)$, 
$$ \sum_{t\in \mathbb F_q} \lambda(t) \widehat{S_{rt}}(m) = (q-1) \widehat{S_0}(m) - \sum_{t\ne 0} \widehat{S_{rt}}(m) = (q-1) \widehat{S_0}(m) - \sum_{t\in \mathbb F_q} \widehat{S_{rt}}(m)+\widehat{S_0}(m).$$ Then the equation \eqref{eqq} follows by \eqref{eqze}.
This completes the proof of Lemma \ref{sumsphere}. 
\vskip.125in 

We shall also need the following orthogonality lemma. 
\begin{lemma}\label{keyorthogonality} Suppose that $r\in \mathbb F_q^*$ and $m, m' \in \mathbb F_q^d.$ If $d\ge 2$ is even, then we have
$$ \sum_{t\in \mathbb F_q} \widehat{S_t}(m)~ \widehat{S_{rt}}(m')
=\left\{\begin{array}{ll} q^{-1} \delta_0(m)~\delta_0(m') 
+ q^{-d}-q^{-d-1}\quad &\mbox {if} \quad \|m\|=r\|m'\|\\                        -q^{-d-1} \quad &\mbox{if} \quad  \|m\|\ne r\|m'\|. \end{array}\right.$$
\end{lemma}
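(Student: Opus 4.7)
The plan is to expand both Fourier transforms via Lemma \ref{Lem1}, exploiting that $d$ is even (so $\eta^d \equiv 1$ on $\mathbb{F}_q^*$, $\eta^d(-1) = 1$, and $G^{2d} = (G^2)^d = (\eta(-1)q)^d = q^d$), and then sum over $t$. Writing $\widehat{S_t}(m) = q^{-1}\delta_0(m) + q^{-d-1} G^d\, T_t(m)$, where $T_t(m) := \sum_{s\in \mathbb{F}_q^*} \chi\bigl(ts + \|m\|/(4s)\bigr)$, and analogously for $\widehat{S_{rt}}(m')$, the product $\widehat{S_t}(m)\,\widehat{S_{rt}}(m')$ breaks into four pieces: the $\delta_0$--$\delta_0$ term, two $\delta_0$--$T$ cross terms, and the main $T_t(m) T_{rt}(m')$ term.

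The $\delta_0$--$\delta_0$ piece contributes $q^{-1}\delta_0(m)\delta_0(m')$ after the sum over $t$, which accounts for the $\delta$-contribution in the first case of the lemma. Each cross term is killed by character orthogonality in $t$: for instance $\sum_{t\in \mathbb{F}_q} T_{rt}(m') = \sum_{s\ne 0} \chi(\|m'\|/(4s)) \sum_{t\in \mathbb{F}_q} \chi(rst) = 0$ since $rs \ne 0$, and the other cross term vanishes by the same argument. The main term unfolds as
\[
	q^{-2d-2} G^{2d} \sum_{s,s'\in \mathbb{F}_q^*} \chi\!\left(\tfrac{\|m\|}{4s}+\tfrac{\|m'\|}{4s'}\right) \sum_{t\in \mathbb{F}_q} \chi\bigl(t(s+rs')\bigr),
\]
where orthogonality in $t$ forces $s=-rs'$ (with the prefactor $q$), collapsing the double sum to a single sum over $s'\in \mathbb{F}_q^*$ with weight $\chi\bigl((r\|m'\|-\|m\|)/(4rs')\bigr)$.

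Finally, I would evaluate $\sum_{s'\ne 0}\chi(u/s')$ with $u=(r\|m'\|-\|m\|)/(4r)$: it equals $q-1$ when $u=0$ (the case $\|m\| = r\|m'\|$), and $-1$ otherwise, since $s'\mapsto u/s'$ is a bijection of $\mathbb{F}_q^*$ when $u\ne 0$. Using $q^{-2d-2}\,G^{2d}\cdot q = q^{-d-1}$, the main term contributes $q^{-d}-q^{-d-1}$ in the first case and $-q^{-d-1}$ in the second; adding the $\delta_0$--$\delta_0$ piece and noting that $\|m\|\ne r\|m'\|$ precludes $m=m'=0$ recovers the stated formula. The only mild subtlety is the Gauss-sum bookkeeping (the fact that $G^{2d} = q^d$ cleanly cancels the $q^{-2d}$ prefactor is what makes the even-dimensional case so tidy); everything else is a direct application of character orthogonality.
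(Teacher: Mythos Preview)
Your proof is correct and follows essentially the same route as the paper: expand each $\widehat{S_t}$ via Lemma~\ref{Lem1}, multiply, kill the cross terms and collapse the $t$-sum by character orthogonality (forcing $s=-rs'$), and evaluate the remaining single sum as $q-1$ or $-1$. The only cosmetic difference is that the paper first derives a dimension-free formula involving $G^{2d}\eta^d(-r)$ (Lemma~\ref{Thm3}) and then specializes to even $d$ using Lemma~\ref{Lem4}, whereas you fold the even-$d$ simplifications ($\eta^d\equiv 1$, $G^{2d}=q^d$) in from the outset.
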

The proof shall be given at the end of the paper (see Lemma \ref{putlem}). With the lemmas in tow, we are ready to handle terms $II,III$ and $IV$. In view of Lemmas \ref{sizeSt} and \ref{sumsphere}, if $m'\ne \vec{0},$ then

$$ \sum_{t\in \mathbb F_q} |S_t| \widehat{S_{rt}}(m')=q^{d-1} \sum_{t\in \mathbb F_q} \widehat{S_{rt}}(m') + q^{\frac{d-2}{2}}\eta\left((-1)^{d/2}\right) \sum_{t\in \mathbb F_q} \lambda(t) \widehat{S_{rt}}(m')=q^{\frac{d}{2}}\eta\left((-1)^{d/2}\right)\widehat{S_0}(m').$$
Using this equation, it follows that
$$ II:=q^d|E|^2 \sum_{m'\ne \vec{0}} |\widehat{E}(m')|^2 \sum_{t\in \mathbb F_q} |S_t| \widehat{S_{rt}}(m')= q^{\frac{3d}{2}} \eta\left((-1)^{d/2}\right) |E|^2 \sum_{m'\ne \vec{0}} |\widehat{E}(m')|^2\widehat{S_0}(m').$$
By the same argument, it is not difficult to see that $II=III$. Namely, we have
\begin{equation}\label{lowII+III} II+III=2q^{\frac{3d}{2}} \eta\left((-1)^{d/2}\right) |E|^2\sum_{m\ne \vec{0}} |\widehat{E}(m)|^2\widehat{S_0}(m).\end{equation}

 We now move on to term $\displaystyle IV:=q^{4d} \sum_{m,m'\ne \vec{0}} |\widehat{E}(m)|^2|\widehat{E}(m')|^2 \sum_{t\in \mathbb F_q} \widehat{S_{t}}(m)\widehat{S_{rt}}(m')$.
Using Lemma \ref{keyorthogonality}, we can write $IV=A+B$, where 
$$ A= -q^{3d-1} \sum_{||m||\ne r||m'||, m,m' \not=\vec{0}} {|\widehat{E}(m)|}^2 {|\widehat{E}(m')|}^2$$ and 
$$ B=(q^{3d}-q^{3d-1}) \sum_{||m|| =r||m'||, m,m' \not=\vec{0}} {|\widehat{E}(m)|}^2 {|\widehat{E}(m')|}^2.$$
It follows that
$$ IV=A+B=q^{3d}\sum_{||m|| =r||m'||, m,m' \not=\vec{0}} {|\widehat{E}(m)|}^2 {|\widehat{E}(m')|}^2
-q^{3d-1} \sum_{m,m' \not=\vec{0}} {|\widehat{E}(m)|}^2 {|\widehat{E}(m')|}^2:=A'-B'.$$

Combining this with equations \eqref{lowI}, \eqref{lowII+III}, we obtain that if $d\ge 2$ is even and $r\ne 0,$ then
\begin{align*} &\sum_{t\in \mathbb F_q} \nu(t)\nu(rt)=I+II+III+IV\\
&= \left(q^{-1} |E|^4 + q^{-d}|E|^4 - q^{-d-1}|E|^4\right) +2q^{\frac{3d}{2}} \eta\left((-1)^{d/2}\right) |E|^2\left(\sum_{m\ne \vec{0}} |\widehat{E}(m)|^2\widehat{S_0}(m)\right) +(A'-B').
%&\ge \left(q^{-1} |E|^4 + q^{-d}|E|^4 - q^{-d-1}|E|^4\right)-2q^{\frac{3d}{2}} |E|^2\left(\sum_{m\ne \vec{0}} |\widehat{E}(m)|^2|\widehat{S_0}(m)|\right)- q^{-d-1}|E|^2\\
%&\ge q^{-1} |E|^4 -2q^{\frac{3d}{2}} |E|^2 \left(\max_{m\ne \vec{0}} 
%|\widehat{S_0}(m)|\right) \left(\sum_{m\in \mathbb F_q^d} |\widehat{E}(m)|^2\right)-q^{d-1}|E|^2.
\end{align*}
Notice that each term above is a real number. It follows that	
\begin{align*}\sum_{t\in \mathbb F_q} \nu(t)\nu(rt)&\ge q^{-1}|E|^4 - 2 q^{\frac{3d}{2}}|E|^2 \left(\max_{m\ne \vec{0}} |\widehat{S_0}(m)|\right) \left(\sum_{m\in \mathbb F_q^d} |\widehat{E}(m)|^2\right) + (A'-B')\\
&=q^{-1}|E|^4 -2q^{\frac{d}{2}}|E|^3 \left(\max_{m\ne \vec{0}} |\widehat{S_0}(m)|\right)+ (A'-B'),\end{align*}
where we used the Plancherel theorem which states $\displaystyle\sum_{m\in \mathbb F_q^d}|\widehat{E}(m)|^2=q^{-d}|E|.$ 
%Since $\displaystyle$(see Lemma \ref{Lem1}) and $ $ by the Plancherel theorem, we deduce that if $d\ge 2$ is even and $r\ne 0,$ then
By definitions of $A'$ and $B'$, we see that
\begin{align*} A'-B' &\ge q^{3d} \left(\sum_{\|m\|=0, m\ne \vec{0}} |\widehat{E}(m)|^2\right)^2 - q^{3d-1} \left(\sum_{m\in \mathbb F_q^d} |\widehat{E}(m)|^2\right)^2 \\
&=q^{3d} \left(\sum_{\|m\|=0, m\ne \vec{0}} |\widehat{E}(m)|^2\right)^2 -q^{d-1}|E|^2.\end{align*}
We also see from  Lemma \ref{Lem1} that if $d\ge 2$ is even, then
$$\max_{m\ne \vec{0}} |\widehat{S_0}(m)| \le q^{-d/2}.$$
Thus we conclude that if $d\ge 2$ is even and $r\ne 0$, then
\begin{equation}\label{Evencom} \sum_{t\in \mathbb F_q} \nu(t)\nu(rt) \ge q^{-1}|E|^4 -2|E|^3 + q^{3d} \left(\sum_{\|m\|=0, m\ne \vec{0}} |\widehat{E}(m)|^2\right)^2 -q^{d-1}|E|^2.\end{equation}
%\begin{equation}\label{Evencom} \sum_{t\in \mathbb F_q} \nu(t)\nu(rt)\ge q^{-1}|E|^4-2|E|^3-q^{d-1}|E|^2.
%\end{equation}

\subsection{An upper bound of $\nu^2(0)$ for even dimensions $d\ge 2$} It follows that 
$$\nu(0)=q^{2d} \sum_{m\in \mathbb F_q^d} \widehat{S_0}(m) |\widehat{E}(m)|^2.$$
By Lemma \ref{Lem1}, notice that if $d\ge 2$ is even, then
$$  \widehat{S_0}(m) = q^{-1} \delta_0(m) + q^{-d-1} G^d \sum_{s \in {\mathbb F}_q^*} 
\chi(s\|m\|).$$
Then we see that
$$\nu(0)= q^{-1}|E|^2 + q^{d-1} G^d \sum_{m\in \mathbb F_q^d} |\widehat{E}(m)|^2 \left(-1+\sum_{s\in \mathbb F_q} \chi(s\|m\|)\right).$$
By  the Plancherel theorem and the orthogonality of $\chi$, 
\begin{equation*}\label{Evanu0}\nu(0)=q^{-1}|E|^2 -q^{-1}G^d|E| + q^d G^d \sum_{\|m\|=0} |\widehat{E}(m)|^2.\end{equation*}
Since $\widehat{E}(\vec{0})=q^{-d}|E|,$  we can write 
\begin{equation}\label{Evanu0}\nu(0)= q^{-1}|E|^2 -q^{-1}G^d|E|+ q^{-d}G^d |E|^2 + q^d G^d \sum_{\|m\|=0, m\ne \vec{0}} |\widehat{E}(m)|^2.\end{equation}
We shall use the following explicit form of the Gauss sum $G.$
\begin{lemma} [\cite{LN97}, Theorem 5.15] \label{Lem4}
Let $\mathbb F_q$ be a finite field with $q=p^\ell$ for an odd prime $p$ and $\ell\in \mathbb N.$ Then the Gauss sum $G$ satisfies that
$$ G=\left\{\begin{array}{ll} (-1)^{\ell-1} q^{\frac{1}{2}} \quad &\mbox {if} \quad p\equiv 1~(\mbox{mod}~4)\\                        
(-1)^{\ell-1} i^\ell q^{\frac{1}{2}} \quad &\mbox{if} \quad p\equiv 3~(\mbox{mod}~4)  . \end{array}\right.$$
\end{lemma}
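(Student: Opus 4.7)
The statement is the classical evaluation of the quadratic Gauss sum over $\mathbb F_{p^\ell}$, cited from Lidl--Niederreiter. My plan is to follow the standard three-step approach: (i) compute the absolute value $|G|$, (ii) determine the sign/argument in the prime case $\ell=1$, and (iii) lift from $\mathbb F_p$ to $\mathbb F_{p^\ell}$ using the Hasse--Davenport relation.

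For step (i), I would start from the definition $G = \sum_{s\in \mathbb F_q^*}\eta(s)\chi(s)$ and compute
\[
|G|^2 = G\,\overline{G} = \sum_{s,t\in \mathbb F_q^*}\eta(st^{-1})\chi(s-t).
\]
After substituting $s=tu$ and exchanging the order of summation, the inner sum $\sum_{t}\chi(t(u-1))$ vanishes unless $u=1$ by orthogonality, yielding $|G|^2=q$. So $G = \varepsilon\, q^{1/2}$ for some $\varepsilon$ with $|\varepsilon|=1$; since $G$ is a sum of $q$-th roots of unity, $\varepsilon \in \{\pm 1,\pm i\}$, and the congruence $G\equiv -1 \pmod{(1-\zeta_p)}$ in $\mathbb Z[\zeta_p]$ narrows the possibilities compatibly with the stated dichotomy.

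For step (ii), the determination of the sign of the quadratic Gauss sum for $q=p$ prime is Gauss's celebrated theorem. I would use one of the standard modern proofs: either Dirichlet's analytic argument evaluating the theta series $\sum_n e^{2\pi i n^2 z}$ at $z=1/p$ via Poisson summation, or the Schur/Weil approach computing the trace of the DFT matrix on $\mathbb F_p$ by diagonalization (the four eigenvalues $\pm 1,\pm i$ have explicit multiplicities that pin down the sign). Either route gives $G_1=\sqrt{p}$ when $p\equiv 1\pmod 4$ and $G_1=i\sqrt{p}$ when $p\equiv 3\pmod 4$.

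For step (iii), I would invoke the Hasse--Davenport product relation, which in this setting asserts $G_\ell = -(-G_1)^\ell$, where $G_\ell$ is the Gauss sum over $\mathbb F_{p^\ell}$ formed from the lifted quadratic character $\eta\circ N$ and the lifted additive character $\chi\circ \mathrm{Tr}$. Unwinding, $G_\ell=(-1)^{\ell-1}G_1^\ell$, which after substituting the two cases of step (ii) yields
\[
G_\ell = \begin{cases} (-1)^{\ell-1} q^{1/2} & \text{if } p\equiv 1\pmod 4,\\ (-1)^{\ell-1} i^\ell q^{1/2} & \text{if } p\equiv 3\pmod 4. \end{cases}
\]
The genuine obstacle is step (ii): the magnitude and $p^\ell$-lift are both straightforward algebraic manipulations, but pinning down the sign of the quadratic Gauss sum is a non-trivial classical theorem that requires either an analytic (theta/Poisson) input or a representation-theoretic computation of DFT eigenvalues. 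Since the lemma is quoted verbatim from \cite{LN97}, in a paper one simply cites it, but the above is the route I would reproduce if asked for details.
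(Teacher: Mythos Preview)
Your outline is correct and follows the standard textbook route (absolute value via $G\overline{G}$, sign for the prime case via Gauss's theorem, lift to $\mathbb F_{p^\ell}$ via Hasse--Davenport), and you correctly identify that the only genuinely hard step is the sign determination in step~(ii). However, there is nothing to compare with here: the paper does not prove this lemma at all but simply quotes it verbatim as Theorem~5.15 of Lidl--Niederreiter~\cite{LN97}. The lemma is used only as a black box to read off that $G^d=\pm q^{d/2}$ when $d$ is even and that $G^{2d}\eta(-1)=q^d$ when $d$ is odd, so no proof was needed for the paper's purposes.
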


Observe from Lemma \ref{Lem4} that if the dimension $d$ is even, then $G^d=\pm q^{d/2}$ where the sign depends on $d$and $q$. 
%$$ G^d=\left\{\begin{array}{ll} -q^{\frac{d}{2}} \quad &\mbox {if} \quad q\equiv 3~(\mbox{mod}~4),~d=4k-2 \quad\mbox{for}~k\in \mathbb N\\                        
 %q^{\frac{d}{2}} \quad &\mbox{otherwise} . \end{array}\right.$$
Combining this observation with \eqref{Evanu0}, and considering the sign of each term, we see that if $d$ is even, then
\begin{equation*}\label{Unuzero} \nu(0) \le \left\{\begin{array}{ll} q^{-1}|E|^2+ q^{\frac{d-2}{2}}|E| \quad &\mbox {if} \quad G^d=-q^{\frac{d}{2}}\\                        
 \displaystyle q^{-1}|E|^2+ q^{-\frac{d}{2}}|E|^2+q^{\frac{3d}{2}}\sum\limits_{\|m\|=0, m\ne \vec{0}} |\widehat{E}(m)|^2 \quad &\mbox {if} \quad G^d=q^{\frac{d}{2}}. \end{array}\right.\end{equation*}
 Assuming that $|E|\ge q^{d/2},$ we see that
 \begin{equation*}\label{Unuzero} \nu(0) \le \left\{\begin{array}{ll} 2q^{-1}|E|^2 \quad &\mbox {if} \quad G^d=-q^{\frac{d}{2}}\\                        
 \displaystyle 2q^{-1}|E|^2+q^{\frac{3d}{2}}\sum\limits_{\|m\|=0, m\ne \vec{0}} |\widehat{E}(m)|^2 \quad &\mbox {if} \quad G^d=q^{\frac{d}{2}}. \end{array}\right.\end{equation*}
 Since $\nu(0)$ is a non-negative real number, it follows that if $|E|\ge q^{d/2},$ then
 $$ \nu^2(0)\le 4q^{-2}|E|^4 +4q^{\frac{3d-2}{2}}|E|^2 \sum\limits_{\|m\|=0, m\ne \vec{0}} |\widehat{E}(m)|^2
 + q^{3d} \left(\sum\limits_{\|m\|=0, m\ne \vec{0}} |\widehat{E}(m)|^2\right)^2 .$$
 Since $\sum\limits_{\|m\|=0, m\ne \vec{0}} |\widehat{E}(m)|^2 \le \sum\limits_{m\in \mathbb F_q^d} |\widehat{E}(m)|^2 =q^{-d}|E|,$ we conclude that if $|E|\ge q^{d/2}$, then
\begin{equation}\label{UnuzeroB} \nu^2(0)\le 4q^{-2}|E|^4 +4 q^{\frac{d-2}{2}}|E|^3 + q^{3d} \left(\sum\limits_{\|m\|=0, m\ne \vec{0}} |\widehat{E}(m)|^2\right)^2 .\end{equation}
Now we are ready to complete the proof of Theorem \ref{main}.
\subsection{Complete proof of Theorem \ref{main}} We must show that \eqref{mama} holds. 
By \eqref{Evencom} and \eqref{UnuzeroB}, it is enough to show that if $|E|\ge 9q^{d/2},$ then
$$ q^{-1}|E|^4 -2|E|^3 -q^{d-1}|E|^2 > 4 q^{-2}|E|^4+ 4 q^{\frac{d-2}{2}}|E|^3.$$
It suffices to show that
$$q^{-1}|E|^4-6 q^{\frac{d-2}{2}}|E|^3 -q^{d-1}|E|^2>4 q^{-2}|E|^4.$$
If $|E|\ge 9q^{d/2}$, then we see that
$$q^{-1}|E|^4-6 q^{\frac{d-2}{2}}|E|^3 -q^{d-1}|E|^2 \ge \frac{1}{3} q^{-1}|E|^4-q^{d-1}|E|^2,$$
so it is sufficient to show that 
$$ \frac{1}{3} q^{-1}|E|^4-q^{d-1}|E|^2 > 4 q^{-2}|E|^4.$$
Observe that if $|E|\ge 9 q^{d/2} (\ge \sqrt{12} q^{d/2})$, then  
$$\frac{1}{3} q^{-1}|E|^4-q^{d-1}|E|^2  \ge  \frac{1}{4} q^{-1}|E|^4.$$
Consequently, it suffices to show that
$$\frac{1}{4} q^{-1}|E|^4 >4 q^{-2}|E|^4.$$
which holds if $q>16.$ Therefore, when $ q\le 16$, it suffices to prove the statement of Theorem \ref{main}.
More precisely, it remains to show that if $|E|\ge 9q^{d/2}$ and $q\le 16$, then $\mathbb F_q=\frac{\Delta(E)}{\Delta(E)}.$ Since $ 9q^{d/2} > 2 q^{(d+1)/2}$ for $q\le 16,$ it will be enough to prove that if 
$|E|>2q^{(d+1)/2}$, then $ \Delta(E)=\mathbb F_q.$ In \cite{IR07}, this was proved by the first listed author and Misha Rudnev. Thus the proof of Theorem \ref{main} is complete.

\section{Proof of Theorem \ref{mainOdd}}
We proceed as in the proof of Theorem \ref{main}.
As seen in \eqref{formulasumproduct},  for $r\in \mathbb F_q^+$, we can write that
$$ \sum_{t\in \mathbb F_q} \nu(t)\nu(rt)$$
$$ =\sum_{t\in \mathbb F_q} \left(q^{-d}{|E|}^2 |S_t|+q^{2d}\sum_{m\ne \vec{0}} \widehat{S_t}(m)|\widehat{E}(m)|^2  \right) \left(q^{-d}{|E|}^2 |S_{rt}|+q^{2d}\sum_{m'\ne \vec{0}} \widehat{S_{rt}}(m')|\widehat{E}(m')|^2  \right)  $$
$$ = q^{-2d}|E|^4 \sum_{t\in \mathbb F_q}|S_t||S_{rt}|  + q^d|E|^2 \sum_{m'\ne \vec{0}} |\widehat{E}(m')|^2 \sum_{t\in \mathbb F_q} |S_t| \widehat{S_{rt}}(m') $$
$$ +q^d|E|^2 \sum_{m\ne \vec{0}} |\widehat{E}(m)|^2 \sum_{t\in \mathbb F_q} |S_{rt}| \widehat{S_{t}}(m)
+ q^{4d} \sum_{m,m'\ne \vec{0}} |\widehat{E}(m)|^2|\widehat{E}(m')|^2 \sum_{t\in \mathbb F_q} \widehat{S_{t}}(m)\widehat{S_{rt}}(m')$$
$$:= \mbox{I} + \mbox{II} + \mbox{III} +\mbox{IV}.$$
The following explicit value of $|S_t|$ is given as Theorem 6.27 in \cite{LN97}.
\begin{lemma}\label{sizeStodd} Let $S_t \subset \mathbb F_q^d$ denote the sphere with radius $t\in \mathbb F_q.$ If $d\ge 3$ is odd, then 
$$|S_t|=q^{d-1} + q^{\frac{d-1}{2}} \eta\left((-1)^{(d-1)/2} t\right),$$
where $\eta$ denotes the quadratic character of $\mathbb F_q^*$ and $\eta(0)=0.$  
\end{lemma}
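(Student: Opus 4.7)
The plan is to compute $|S_t|$ directly by the standard character-sum expansion for a diagonal quadric in $\mathbb F_q^d$, and then read off the stated formula using the explicit Gauss-sum evaluation already recorded in Lemma \ref{Lem4}.

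First I would insert the orthogonality identity
$$\mathbf{1}[\,\|x\|=t\,] \;=\; q^{-1}\sum_{s\in\mathbb F_q}\chi\bigl(s(\|x\|-t)\bigr)$$
and sum over $x\in\mathbb F_q^d$. The $s=0$ term contributes the main term $q^{d-1}$, and because $\|x\|=x_1^2+\cdots+x_d^2$ is diagonal, the inner $x$-sum for each $s\neq 0$ factors into $d$ identical one-variable sums, giving
$$|S_t| \;=\; q^{d-1} + q^{-1}\sum_{s\neq 0}\chi(-st)\Bigl(\sum_{u\in\mathbb F_q}\chi(su^2)\Bigr)^{\!d}.$$

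Next I would invoke the standard evaluation $\sum_{u\in\mathbb F_q}\chi(su^2)=\eta(s)G$ for $s\neq 0$, where $G$ is the Gauss sum of Lemma \ref{Lem4}. Since $d$ is odd, $\eta(s)^d=\eta(s)$, so this reduces to
$$|S_t| \;=\; q^{d-1} + \frac{G^d}{q}\sum_{s\neq 0}\eta(s)\chi(-st).$$
For $t=0$ the inner sum is $\sum_{s\neq 0}\eta(s)=0$, giving $|S_0|=q^{d-1}$, which matches the stated formula under the convention $\eta(0)=0$. For $t\neq 0$, the substitution $u=-st$ together with multiplicativity of $\eta$ pulls out $\eta(-t)$ and leaves the sum $\sum_{u\neq 0}\eta(u)\chi(u)=G$, so
$$|S_t| \;=\; q^{d-1} + q^{-1}\eta(-t)\,G^{d+1}.$$

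The final step is to evaluate $G^{d+1}$. Because $d+1$ is even and $G^2=\eta(-1)q$, we get $G^{d+1}=\eta(-1)^{(d+1)/2}q^{(d+1)/2}$, producing the coefficient $\eta(-1)^{(d+1)/2}\eta(-t)\,q^{(d-1)/2}$ in front of the correction term. The identity $\eta(-1)^{(d+1)/2}\eta(-t)=\eta((-1)^{(d+3)/2}t)=\eta((-1)^{(d-1)/2}t)$, where the last equality uses that $(d+3)/2$ and $(d-1)/2$ have the same parity, yields the claimed formula. The only real obstacle is the parity bookkeeping in this last sign manipulation — tracking the $\pm 1$s introduced by the Gauss-sum evaluation versus those coming from $\eta(-t)$ — but this is a short mechanical verification once $G^2=\eta(-1)q$ is in hand.
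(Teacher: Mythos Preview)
Your argument is correct. The orthogonality expansion, the factorization into one-variable quadratic Gauss sums, the reduction via $G^2=\eta(-1)q$, and the final parity check $(-1)^{(d+3)/2}=(-1)^{(d-1)/2}$ are all clean and accurate.

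Note, however, that the paper does not actually prove this lemma: it simply cites it as Theorem~6.27 of Lidl--Niederreiter~\cite{LN97}. What you have written is essentially the standard textbook derivation behind that citation, so you have supplied a self-contained proof where the paper chose to defer to the literature. There is no conflict of approach --- your computation is the natural one, and it dovetails with the tools (Lemma~\ref{Lem1}, Lemma~\ref{Lem4}) already in the paper.
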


We record from Lemma \ref{Lem1} that if $d\ge 3$ is odd, then
for any $m\in {\mathbb F}_q^d,$
\begin{equation}\label{oddFsphere} \widehat{S_j}(m) = q^{-1} \delta_0(m) + q^{-d-1}\eta(-1) G^d \sum_{s \in {\mathbb F}_q^*} 
\eta(s)\chi\Big(js+ \frac{\|m\|}{4s}\Big).\end{equation}

\subsection{Estimate of $\displaystyle\sum_{t\in \mathbb F_q}\nu(t)\nu(rt)$ for odd dimensions $d\ge 3$} Since $\displaystyle\sum_{t\in \mathbb F_q^*} \eta(t)=0$ (by the orthogonality of $\eta$) and $\eta(0)=0,$ it follows from Lemma \ref{sizeStodd} that
$$\mbox{I}:= q^{-2d}|E|^4 \sum_{t\in \mathbb F_q}|S_t||S_{rt}|$$
$$=q^{-2d}|E|^4 \sum_{t\in \mathbb F_q} \left(q^{d-1} + q^{\frac{d-1}{2}} \eta\left((-1)^{(d-1)/2} t\right)\right) \left(q^{d-1} + q^{\frac{d-1}{2}} \eta\left((-1)^{(d-1)/2} rt\right)\right)$$  
$$=q^{-2d}|E|^4 \left(\sum_{t\in \mathbb F_q}q^{2d-2} + \sum_{t\in \mathbb F_q} q^{d-1}\eta(r)\eta^2(t)\right)=q^{-2d}|E|^4 \left(q^{2d-1} + q^{d-1}\eta(r) (q-1)\right).$$
Since $\eta(r)=1$ (by the assumption that $r\in \mathbb F_q^+$), we have
$$ \mbox{I}\ge q^{-1}|E|^4.$$ 
In order to estimate the second term $\mbox{II},$  we begin by proving the following result.
\begin{lemma}\label{OmegaV} Let $S_j$ be the sphere in $\mathbb F_q^d$ for odd $d\ge 3.$ Then for $r\ne 0$ and $m\ne \vec{0}$, we have
$$ \Omega:=\sum_{t\in \mathbb F_q} |S_t| \widehat{S_{rt}}(m) = q^{\frac{-d-3}{2}} G^{d+1} \eta\left(r\,(-1)^{(d+1)/2}\right) \left(-1+\sum_{s\in \mathbb F_q} \chi(s\|m\|)\right).$$
\end{lemma}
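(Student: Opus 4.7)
My approach is to expand $|S_t|$ via Lemma~\ref{sizeStodd} and $\widehat{S_{rt}}(m)$ via the formula~\eqref{oddFsphere}, multiply out, and evaluate each resulting term via orthogonality of the additive character $\chi$ and the Gauss-sum identity $\sum_{t\neq 0}\eta(t)\chi(at)=\eta(a)G$ for $a\neq 0$. Since $m\ne\vec{0}$, the $\delta_0$-term in~\eqref{oddFsphere} drops, leaving
\[
\widehat{S_{rt}}(m)=q^{-d-1}\eta(-1)G^{d}\sum_{s\in\mathbb F_q^*}\eta(s)\,\chi\!\left(rts+\tfrac{\|m\|}{4s}\right).
\]

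The plan is to split $\Omega$ into a "main" piece coming from the $q^{d-1}$ part of $|S_t|$ and a "$\eta$-piece" coming from $q^{(d-1)/2}\eta((-1)^{(d-1)/2}t)$, interchanging the $t$ and $s$ summations in each. In the main piece the inner sum becomes $\sum_{t\in\mathbb F_q}\chi(rts)$, which vanishes because $rs\ne 0$ whenever $s\in\mathbb F_q^*$; so this piece contributes nothing. In the $\eta$-piece, after pulling out $\eta((-1)^{(d-1)/2})$ and using $\eta(ts)=\eta(t)\eta(s)$, the inner $t$-sum becomes $\sum_{t\neq 0}\eta(t)\chi((rs)t)=\eta(rs)G=\eta(r)\eta(s)G$. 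This turns the remaining $s$-sum into $\sum_{s\ne 0}\eta(s)^2\chi(\|m\|/(4s))=\sum_{s\ne 0}\chi(\|m\|/(4s))$.

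Collecting powers of $q$ gives the prefactor $q^{(d-1)/2}\cdot q^{-d-1}=q^{-(d+3)/2}$, one extra $G$ combines with $G^d$ to yield $G^{d+1}$, and the quadratic characters combine as $\eta(-1)\eta((-1)^{(d-1)/2})\eta(r)=\eta\bigl(r(-1)^{(d+1)/2}\bigr)$, matching the stated prefactor. It remains to identify the bracket: substituting $u=1/(4s)$ shows
\[
\sum_{s\in\mathbb F_q^*}\chi\!\left(\tfrac{\|m\|}{4s}\right)=\sum_{u\in\mathbb F_q^*}\chi(u\|m\|)=-1+\sum_{s\in\mathbb F_q}\chi(s\|m\|),
\]
which handles both $\|m\|=0$ (giving $q-1$) and $\|m\|\ne 0$ (giving $-1$) uniformly via $\chi$-orthogonality.

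The only step that requires any care is keeping the bookkeeping of the quadratic character factors straight, in particular verifying $\eta(-1)\eta((-1)^{(d-1)/2})=\eta((-1)^{(d+1)/2})$ and the Gauss-sum identity $\sum_{t\neq 0}\eta(t)\chi(at)=\eta(a)G$; but these are routine and everything else is a clean interchange of summation plus orthogonality, so I do not anticipate a substantive obstacle.
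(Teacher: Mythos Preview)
Your proof is correct and follows essentially the same route as the paper: expand $|S_t|$ via Lemma~\ref{sizeStodd}, kill the $q^{d-1}$ contribution (the paper cites \eqref{eqze} of Lemma~\ref{sumsphere} for this, which is proved by exactly the orthogonality argument you give inline), then substitute \eqref{oddFsphere}, evaluate the inner $t$-sum as $\eta(rs)G$, and finish with the change of variable in the remaining $s$-sum. The bookkeeping of the $\eta$-factors and the power of $q$ is carried out exactly as in the paper.
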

To prove this lemma, recall from \eqref{eqze} of Lemma \ref{sumsphere} that $\displaystyle\sum_{t\in \mathbb F_q}\widehat{S}_{rt}(m)=0$ for $r\ne 0$ and $m\ne \vec{0}.$ By Lemma \ref{sizeStodd},
$$ \Omega=\sum_{t\in \mathbb F_q} \left( q^{d-1}+q^{\frac{d-1}{2}} \eta\left((-1)^{(d-1)/2} t\right)\right) \widehat{S_{rt}}(m)=q^{\frac{d-1}{2}}\eta\left((-1)^{(d-1)/2} \right)\sum_{t\in \mathbb F_q}\eta(t)\widehat{S_{rt}}(m). $$
By using  the value of $\widehat{S_{rt}}(m)$ in \eqref{oddFsphere}, we can write
$$\Omega=q^{\frac{-d-3}{2}} \eta\left((-1)^{(d+1)/2}\right) G^d \sum_{s\ne 0} \eta(s) \chi\left(\frac{\|m\|}{4s}\right) \left(\sum_{t\in \mathbb F_q} \eta(t) \chi(rst)\right).$$
Since $\eta(0)=0$ and $ \eta(a)=\eta(a^{-1})$ for $a\ne 0,$ a simple change of variables yields 
$$ \sum_{t\in \mathbb F_q} \eta(t) \chi(rst) = \eta(rs) G$$
and thus we have 
$$ \Omega=q^{\frac{-d-3}{2}} \eta\left((-1)^{(d+1)/2}\right) G^{d+1} \eta(r) \sum_{s\ne 0} \chi(s\|m\|),$$
which completes the proof of Lemma \ref{OmegaV}. \\

By  Lemma \ref{OmegaV} and the orthogonality of $\chi,$ we see that 
$$ \mbox{II}:=q^d|E|^2 \sum_{m'\ne \vec{0}} |\widehat{E}(m')|^2 \sum_{t\in \mathbb F_q} |S_t| \widehat{S_{rt}}(m')$$
$$ =q^{\frac{d-1}{2}} |E|^2  G^{d+1} \eta\left(r\,(-1)^{(d+1)/2}\right) \sum_{m'\ne \vec{0}, \|m'\|=0} |\widehat{E}(m')|^2 $$
$$- q^{\frac{d-3}{2}} |E|^2  G^{d+1} \eta\left(r\,(-1)^{(d+1)/2}\right) \sum_{m'\ne \vec{0}} |\widehat{E}(m')|^2 := \mbox{II}_1-\mbox{II}_2.$$
Now observe from Lemma \ref{Lem4} that $G^{d+1}\in \mathbb R$ for odd $d$ and so both $\mbox{II}_1$ and $\mbox{II}_2$ are real numbers. Furthermore, both values ​​are real numbers with the same sign. 
Hence, $\mbox{II}=\mbox{II}_1-\mbox{II}_2 \ge \min\{-|\mbox{II}_1|, -|\mbox{II}_2|\}.$
Since 
$$\min\{-|\mbox{II}_1|, -|\mbox{II}_2|\} \ge - \left|q^{\frac{d-1}{2}} |E|^2  G^{d+1} \eta\left(r\,(-1)^{(d+1)/2}\right)\right| \sum_{m'\in \mathbb F_q^d} |\widehat{E}(m')|^2,$$ which is same as
$-|E|^3,$ we obtain that
$$\mbox{II}\ge -|E|^3.$$
By the same argument, it is not hard to see that $II=III$ and we also have 
$$ \mbox{III}\ge -|E|^3.$$

In order to estimate the forth term $\mbox{IV},$ we shall need the following orthogonality lemma. 
\begin{lemma}\label{keyorthogonalityodd} Suppose that $r\in \mathbb F_q^*$ and $m, m' \in \mathbb F_q^d.$ If $d\ge 3$ is odd, then we have
$$ \sum_{t\in \mathbb F_q} \widehat{S_t}(m)~ \widehat{S_{rt}}(m')
=\left\{\begin{array}{ll} q^{-1} \delta_0(m)~\delta_0(m') 
+ (q^{-d}-q^{-d-1}) \eta(r)\quad &\mbox {if} \quad \|m\|=r\|m'\|\\                        -q^{-d-1} \eta(r)\quad &\mbox{if} \quad  \|m\|\ne r\|m'\|, \end{array}\right.$$
where $\eta$ denotes the quadratic character of $\mathbb F_q^*.$
\end{lemma}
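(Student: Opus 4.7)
The plan is to mimic the proof of Lemma \ref{keyorthogonality} in the even case, but using instead the odd-dimensional expression \eqref{oddFsphere} for $\widehat{S_j}(m)$. Decompose
$$\widehat{S_t}(m) = q^{-1}\delta_0(m) + K_t(m), \qquad K_t(m) := q^{-d-1}\eta(-1) G^d \sum_{s\ne 0}\eta(s)\chi\!\left(ts + \frac{\|m\|}{4s}\right),$$
and similarly for $\widehat{S_{rt}}(m')$. Expanding the product $\widehat{S_t}(m)\widehat{S_{rt}}(m')$ into four pieces and summing over $t\in\mathbb F_q$ reduces the claim to evaluating four sums.

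First I would handle the three ``easy'' pieces. The $\delta_0\delta_0$ contribution sums to $q\cdot q^{-2}\delta_0(m)\delta_0(m')=q^{-1}\delta_0(m)\delta_0(m')$. The two cross terms vanish: in $q^{-1}\delta_0(m)\sum_t K_{rt}(m')$ the inner $t$-sum reduces to $\sum_t\chi(rts)$, which is $0$ for every $s\ne 0$ (since $r\ne 0$), and the other cross term is symmetric.

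The substantive piece is
$$\sum_t K_t(m)K_{rt}(m') = q^{-2d-2}G^{2d}\sum_{s,s'\ne 0}\eta(s)\eta(s')\chi\!\left(\frac{\|m\|}{4s}+\frac{\|m'\|}{4s'}\right)\sum_{t\in\mathbb F_q}\chi\bigl(t(s+rs')\bigr).$$
Orthogonality of $\chi$ forces $s'=-s/r$, so that $\eta(s)\eta(s')$ collapses to $\eta(-1)\eta(r)$ (using $\eta(s)^2=1$ and $\eta(r^{-1})=\eta(r)$). Invoking the standard identity $G^2 = \eta(-1)q$ yields $G^{2d}\eta(-1)=\eta(-1)^{d+1}q^d = q^d$ since $d$ is odd, so the scalar prefactor simplifies to $q^{-d-1}\eta(r)$.

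What remains is the inner character sum $\sum_{s\ne 0}\chi\!\left((\|m\|-r\|m'\|)/(4s)\right)$, which equals $q-1$ when $\|m\|=r\|m'\|$ and $-1$ otherwise (as $s$ ranges over $\mathbb F_q^*$, any nonzero multiple of $1/s$ ranges over $\mathbb F_q^*$, and $\sum_{x\ne 0}\chi(x)=-1$). This produces the two cases in the statement, with $\delta_0(m)\delta_0(m')=0$ automatic in the second case because $m=m'=0$ would force $\|m\|=r\|m'\|$. The main bookkeeping obstacle, and what distinguishes the odd case from the even one, is tracking how the single $\eta(s)$ factor (rather than $\eta^{d}(s)=1$) propagates into an $\eta(r)$ dependence on $r$ -- which is exactly the reason Theorem \ref{mainOdd} recovers only the squares rather than all of $\mathbb F_q$.
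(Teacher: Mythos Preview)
Your proof is correct and follows essentially the same route as the paper: decompose $\widehat{S_t}(m)$ into the $\delta_0$ term plus the Kloosterman-type piece, kill the cross terms by orthogonality of $\chi$, collapse the double sum via $s'=-s/r$, and evaluate the remaining geometric character sum. The one minor difference is that you invoke the standard identity $G^2=\eta(-1)q$ to obtain $G^{2d}\eta(-1)=q^d$, whereas the paper derives this from the explicit form of $G$ (Lemma~\ref{Lem4}); your route is slightly cleaner but otherwise the arguments coincide.
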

The proof shall be given at the end of the paper (see Lemma \ref{putlem}). 
By the definition of the term $\mbox{IV}$ and Lemma \ref{keyorthogonalityodd}, it follows that
$$ \mbox{IV}:=q^{4d} \sum_{m,m'\ne \vec{0}} |\widehat{E}(m)|^2|\widehat{E}(m')|^2 \sum_{t\in \mathbb F_q} \widehat{S_{t}}(m)\widehat{S_{rt}}(m')$$
$$=-q^{3d-1}\eta(r) \sum_{m,m'\ne \vec{0}, \|m\|\ne r\|m'\|} |\widehat{E}(m)|^2 |\widehat{E}(m')|^2$$
 $$+ (q^{3d}-q^{3d-1}) \eta(r) \sum_{m,m'\ne \vec{0}, \|m\|=r\|m'\|} |\widehat{E}(m)|^2 |\widehat{E}(m')|^2.$$
Since $\eta(r)=1$ (by our assumption that $r$ is a square number in $\mathbb F_q^*$), the second term above is positive. Thus we have
$$ \mbox{IV}\ge - q^{3d-1} \sum_{m, m'\in \mathbb F_q^d} |\widehat{E}(m)|^2 |\widehat{E}(m')|^2.$$
By the Plancherel theorem,  
$$\mbox{IV}\ge -q^{d-1} |E|^2.$$
Putting this together with all other estimates, we obtain that if $d\ge 3$ is odd and $r$ is a square number, then
\begin{equation}\label{productvtodd}\sum_{t\in \mathbb F_q} \nu(t)\nu(rt):=\mbox{I}+ \mbox{II}+\mbox{III}+\mbox{IV}\ge  q^{-1}|E|^4-2|E|^3-q^{d-1} |E|^2.\end{equation} 
\vskip.125in 
\subsection{Estimate of $\nu^2(0)$ for odd dimensions $d\ge 3$}
Recall that we can write 
$$\nu(0)=q^{2d} \sum_{m\in \mathbb F_q^d} \widehat{S_0}(m) |\widehat{E}(m)|^2.$$
$$= q^{2d} \widehat{S_0}(\vec{0}) |\widehat{E}(\vec{0})|^2 + q^{2d}\sum_{m\ne \vec{0}} \widehat{S_0}(m) |\widehat{E}(m)|^2:= M +R.$$
Since $|S_0|=q^{d-1}$ for odd $d\ge 3$ (see Lemma \ref{sizeStodd}), 
$$ M= q^{-d} |S_0| |E|^2 = q^{-1}|E|^2.$$
To estimate $R$, observe that
$$ R \le q^{2d}\left(\max_{m\ne \vec{0}} |\widehat{S_0}(m)|\right) \left(\sum_{m\in \mathbb F_q^d} |\widehat{E}(m)|^2\right) =\left(\max_{m\ne \vec{0}} |\widehat{S_0}(m)|\right) q^{d}|E|.$$ 
By the equation \eqref{oddFsphere},  we see that if $d\ge 3$ is odd and $m \ne \vec{0}$, then
$$ \widehat{S_0}(m)=q^{-d-1}\eta(-1) G^d \sum_{s\ne 0} \eta(s) \chi\left(\frac{\|m\|}{4s}\right).$$
Since  $\displaystyle\left|\sum_{s\ne 0} \eta(s) \chi\left(\frac{\|m\|}{4s}\right)\right| = \sqrt{q}$ for $\|m\|\ne 0$ and $0$ otherwise, we see 
$$ \max_{m\ne \vec{0}} |\widehat{S_0}(m)| \le q^{\frac{-d-1}{2}}.$$
Hence we obtain
$$ R\le q^{\frac{d-1}{2}}|E|.$$
We have seen that $\nu(0):=M+R\le q^{-1}|E|^2 + q^{\frac{d-1}{2}}|E|$ which in turn implies 
\begin{equation}\label{V0Odd}\nu^2(0)\le q^{-2}|E|^4 +2 q^{\frac{d-3}{2}}|E|^3+ q^{d-1}|E|^2,\end{equation}
since $\nu(0)$ is a non-negative integer.
\vskip.125in 

\subsection{Complete proof of Theorem \ref{mainOdd}} Let $d\ge 3$ be odd. Suppose that $r$ is a square number in $\mathbb F_q^*.$
We must show  that if $E\subset \mathbb F_q^d $ with $|E|\ge 6q^{d/2},$
 then
$$ \sum_{t\in \mathbb F_q} \nu(t)\nu(rt) > \nu^2(0).$$
By \eqref{productvtodd} and \eqref{V0Odd}, it will be enough to show that if $|E|\ge 6q^{d/2}$, then
$$ q^{-1}|E|^4-2|E|^3-q^{d-1} |E|^2> q^{-2}|E|^4 +2 q^{\frac{d-3}{2}}|E|^3+ q^{d-1}|E|^2.$$
Note that to prove this it suffices to show that
$$ q^{-1}|E|^4-4q^{\frac{d-3}{2}}|E|^3-2q^{d-1} |E|^2>q^{-2}|E|^4.$$
If $|E|\ge 6 q^{d/2}(\ge 6 q^{(d-1)/2})$, then we see that
$$ q^{-1}|E|^4-4q^{\frac{d-3}{2}}|E|^3-2q^{d-1} |E|^2\ge \frac{1}{3} q^{-1}|E|^4 -2q^{d-1} |E|^2.$$
Hence it is sufficient to show that if $|E|\ge 6q^{d/2},$ then
$$\frac{1}{3} q^{-1}|E|^4 -2q^{d-1} |E|^2 > q^{-2}|E|^4.$$
Observe that if $|E|\ge 6q^{d/2} (\ge \sqrt{24}q^{d/2}),$ then
$$\frac{1}{3} q^{-1}|E|^4 -2q^{d-1} |E|^2\ge \frac{1}{4} q^{-1}|E|^4.$$
In conclusion, it is enough to prove that if $|E|\ge 6 q^{d/2},$ then
$$\frac{1}{4} q^{-1}|E|^4 > q^{-2}|E|^4,$$
which is clearly true provided that $q>4.$ For this reason, it suffices to prove the statement of Theorem \ref{mainOdd} in the case when $q\le 4$ and $|E|\ge 6 q^{d/2}.$ In other words, our task is to prove that
if $|E|\ge 6q^{d/2}$ for $q\le 4$, then $\mathbb F_q=\frac{\Delta(E)}{\Delta(E)}.$ Since $ 6q^{d/2}> 2 q^{(d+1)/2}$ for $q\le 4$, it will be enough to show that if $|E|> 2q^{(d+1)/2}$, then $\Delta(E)=\mathbb F_q.$ 
This is a well-known result on the Erd\H{o}s-Falconer distance problem due to the first listed author and Misha Rudnev \cite{IR07}.
Thus we finish the proof of Theorem \ref{mainOdd}. 
\section{Proof of Lemmas \ref{keyorthogonality} and \ref{keyorthogonalityodd}} 
\vskip.125in 
We begin by proving the following lemma.
\begin{lemma}\label{Thm3}
Let $r\in \mathbb F_q^*$ and $m, m'\in \mathbb F_q^d.$ Then we have
$$ \sum_{t\in \mathbb F_q} \widehat{S_t}(m)~ \widehat{S_{rt}}(m')
=\left\{\begin{array}{ll} q^{-1} \delta_0(m)~\delta_0(m') + q^{-2d}G^{2d}\eta^d(-r)(1-q^{-1})\quad &\mbox {if} \quad \|m\|=r\|m'\|\\                        -q^{-2d-1}G^{2d} \eta^d(-r)\quad &\mbox{if} \quad  \|m\|\ne r\|m'\|. \end{array}\right.$$
\end{lemma}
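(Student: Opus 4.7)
The plan is to substitute the explicit formula from Lemma \ref{Lem1} for both $\widehat{S_t}(m)$ and $\widehat{S_{rt}}(m')$ and expand the product into four terms: a $\delta_0\delta_0$ piece, two cross terms, and one main term involving a double sum over $s, s' \in \mathbb F_q^*$. Summing over $t \in \mathbb F_q$ then reduces to standard orthogonality.

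First, writing $\widehat{S_t}(m) = q^{-1}\delta_0(m) + q^{-d-1}\eta^d(-1)G^d A_t(m)$ where $A_t(m) := \sum_{s\ne 0}\eta^d(s)\chi\bigl(ts + \|m\|/(4s)\bigr)$, the $\delta_0\delta_0$ contribution summed over $t$ yields $q\cdot q^{-2}\delta_0(m)\delta_0(m') = q^{-1}\delta_0(m)\delta_0(m')$. The two cross terms contain a factor $\sum_{t\in\mathbb F_q}\chi(rts) = 0$ (resp.\ $\sum_{t}\chi(ts) = 0$) because $r,s \neq 0$, and therefore vanish. This reduces the problem to computing the main term
\[
q^{-2d-2}\eta^{2d}(-1)G^{2d}\sum_{t\in\mathbb F_q}A_t(m)A_{rt}(m'),
\]
and $\eta^{2d}(-1) = 1$.

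Next I would open up $A_t(m)A_{rt}(m')$ as a double sum over $s, s' \ne 0$, interchange the order of summation, and evaluate $\sum_{t\in\mathbb F_q}\chi\bigl(t(s + rs')\bigr)$. This sum equals $q$ when $s' = -s/r$ and $0$ otherwise, collapsing the double sum to a single sum in $s$. The coefficient simplifies using $\eta^d(s)\eta^d(-s/r) = \eta^d(-s^2/r) = \eta^d(-r)$, where I use $\eta(s^2) = 1$ for $s\ne 0$ and $\eta(r^{-1}) = \eta(r)$. What remains is
\[
q\,\eta^d(-r)\sum_{s\ne 0}\chi\!\left(\frac{\|m\| - r\|m'\|}{4s}\right).
\]

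Finally, a change of variable $u = (\|m\| - r\|m'\|)/(4s)$ together with $\sum_{u\in\mathbb F_q^*}\chi(u) = -1$ shows this remaining $s$-sum equals $q - 1$ when $\|m\| = r\|m'\|$ and $-1$ otherwise. Combining with the $q^{-2d-2}G^{2d}$ prefactor produces exactly $q^{-2d}G^{2d}\eta^d(-r)(1-q^{-1})$ in the first case and $-q^{-2d-1}G^{2d}\eta^d(-r)$ in the second. The only nontrivial bookkeeping is the $\eta$-character algebra yielding $\eta^d(-r)$; once that identity is pinned down, the rest is routine orthogonality of additive characters.
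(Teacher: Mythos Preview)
Your proof is correct and follows essentially the same route as the paper: substitute Lemma~\ref{Lem1} for both factors, observe that the two cross terms vanish by orthogonality of $\chi$, collapse the double $(s,s')$-sum via $\sum_t\chi(t(s+rs'))$ to $s'=-s/r$, reduce the quadratic-character factor to $\eta^d(-r)$, and evaluate the remaining single sum as $q-1$ or $-1$ according to whether $\|m\|=r\|m'\|$. The only cosmetic difference is that the paper substitutes $s\mapsto 1/(4s)$ to rewrite $\sum_{s\ne 0}\chi\bigl((\|m\|-r\|m'\|)/(4s)\bigr)$ as $\sum_{s\ne 0}\chi\bigl(s(\|m\|-r\|m'\|)\bigr)$ before invoking orthogonality, whereas you do the equivalent substitution $u=(\|m\|-r\|m'\|)/(4s)$ directly.
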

\begin{proof}
By Lemma \ref{Lem1}, we have
$$  \widehat{S_t}(m)=q^{-1} \delta_0(m) + q^{-d-1} \eta^d(-1)G^d \sum_{s \in \mathbb F_q^*} \eta^d(s)
\chi\Big(ts+ \frac{\|m\|}{4s}\Big) := A(t)+ B(t)$$
and
$$ \widehat{S_{rt}}(m')=q^{-1} \delta_0(m')+ q^{-d-1} \eta^d(-1)G^d \sum_{s'\in \mathbb F_q^*} \eta^d(s')\chi\Big(rts'+ \frac{\|m'\|}{4s'}\Big) :=C(t) + D(t). $$
Since $\displaystyle\sum_{t\in \mathbb F_q} A(t)D(t) =0 = \sum_{t\in \mathbb F_q} B(t)C(t)$ by the orthogonality of $\chi,$ we have
$$ \sum_{t\in \mathbb F_q} \widehat{S_t}(m)~ \widehat{S_{rt}}(m') =  \sum_{t\in \mathbb F_q} A(t)C(t) + \sum_{t\in \mathbb F_q} B(t) D(t)$$
$$= q^{-1} \delta_0(m) \delta_0(m') + q^{-2d-2} G^{2d} \sum_{s,s'\in \mathbb F_q^*} \eta^d(s)\eta^d(s')\chi\left(\frac{\|m\|}{4s} + \frac{\|m'\|}{4s'}\right) \sum_{t\in \mathbb F_q} \chi(t(s +rs'))$$
$$= q^{-1} \delta_0(m) \delta_0(m') + q^{-2d-1} G^{2d} \sum_{s\in \mathbb F_q^*}\eta^d(-s^2/ r) \chi\left(\frac{\|m\|}{4s}-\frac{r\|m'\|}{4s}\right) $$ 
$$=q^{-1} \delta_0(m) \delta_0(m') + q^{-2d-1} G^{2d} \eta^d(-r) \sum_{s\in \mathbb F_q^*} \chi(s(\|m\|-r\|m'\|))$$
$$=q^{-1} \delta_0(m) \delta_0(m') + \left[q^{-2d-1} G^{2d} \eta^d(-r) \sum_{s\in \mathbb F_q} \chi(s(\|m\|-r\|m'\|))\right] - q^{-2d-1} G^{2d} \eta^d(-r).$$
Thus the statement follows by the orthogonality of $\chi.$
\end{proof}
As a corollary of Lemma \ref{Thm3}, one can deduce Lemmas \ref{keyorthogonality} and \ref{keyorthogonalityodd} which can be restated as follows.
\begin{lemma}\label{putlem} Suppose that $r\in \mathbb F_q^*$ and $m, m' \in \mathbb F_q^d.$ If $d\ge 2$ is even, then we have
$$ \sum_{t\in \mathbb F_q} \widehat{S_t}(m)~ \widehat{S_{rt}}(m')
=\left\{\begin{array}{ll} q^{-1} \delta_0(m)~\delta_0(m') 
+ q^{-d}-q^{-d-1}\quad &\mbox {if} \quad \|m\|=r\|m'\|\\                        -q^{-d-1} \quad &\mbox{if} \quad  \|m\|\ne r\|m'\|. \end{array}\right.$$
On the other hand, if $d\ge 3$ is odd, then we have
$$ \sum_{t\in \mathbb F_q} \widehat{S_t}(m)~ \widehat{S_{rt}}(m')
=\left\{\begin{array}{ll} q^{-1} \delta_0(m)~\delta_0(m') 
+ (q^{-d}-q^{-d-1}) \eta(r)\quad &\mbox {if} \quad \|m\|=r\|m'\|\\                        -q^{-d-1} \eta(r)\quad &\mbox{if} \quad  \|m\|\ne r\|m'\|. \end{array}\right.$$
\end{lemma}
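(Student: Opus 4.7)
The plan is to obtain Lemma \ref{putlem} as a direct corollary of Lemma \ref{Thm3}, whose proof has already been carried out via Fourier-analytic expansion and the orthogonality of $\chi$. The only work left is to simplify the prefactors $q^{-2d} G^{2d} \eta^d(-r)(1-q^{-1})$ and $-q^{-2d-1} G^{2d} \eta^d(-r)$ appearing in Lemma \ref{Thm3}, separating the even and odd dimensional cases.

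The key input will be the standard Gauss sum identity $G^2 = \eta(-1)\,q$, which immediately gives $G^{2d} = \eta(-1)^d\, q^d$. From this, the common prefactor simplifies to
\[
q^{-2d} G^{2d} \eta^d(-r) = q^{-d}\,\eta(-1)^d\,\eta^d(-r) = q^{-d}\,\eta^d(r),
\]
using the multiplicativity $\eta(-1)\eta(-r) = \eta(r)$ and the fact that $\eta$ is a character.

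When $d$ is even, $\eta^d(r) = 1$ for every $r \in \mathbb F_q^*$, so the prefactor collapses to $q^{-d}$ and Lemma \ref{Thm3} yields exactly the first statement of Lemma \ref{putlem}. When $d$ is odd, $\eta^d(r) = \eta(r)$, and one directly recovers the second statement with the character $\eta(r)$ attached to both the diagonal $(q^{-d}-q^{-d-1})$ term and the off-diagonal $-q^{-d-1}$ term. Since each of these verifications is just a one-line substitution, there is no real obstacle; the only point to be careful about is tracking the parity of $d$ in the exponents of $\eta(-1)$ and $\eta(-r)$, so that the sign/character factors combine cleanly into the single factor $\eta(r)$ in the odd case.
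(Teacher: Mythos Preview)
Your proposal is correct and follows essentially the same route as the paper: both derive Lemma~\ref{putlem} as an immediate corollary of Lemma~\ref{Thm3} by simplifying the factor $q^{-2d}G^{2d}\eta^d(-r)$. The only cosmetic difference is that you invoke the standard identity $G^2=\eta(-1)q$ to get $G^{2d}=\eta(-1)^dq^d$ and then combine $\eta(-1)^d\eta^d(-r)=\eta^d(r)$ in one stroke, whereas the paper reaches the same conclusion by appealing to the explicit Gauss sum values in Lemma~\ref{Lem4} together with a case split on $q\bmod 4$ to evaluate $\eta(-1)$; your version is slightly cleaner but the logical structure is identical.
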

\begin{proof} Suppose that $d\ge 2$ is even. Then $\eta^d=1.$ By Lemma \ref{Lem4}, we see that $G^{2d}=q^d$ for even $d\ge 2.$ Thus the statement follows by Theorem \ref{Thm3}.\\
Next, assume that $d\ge 3$ is odd. Then $\eta^d=\eta.$ Hence, by Theorem \ref{Thm3} it suffices to show that $G^{2d}\eta(-1)=q^d$ for odd $d\ge 3.$ This equality follows by combining Lemma \ref{Lem4} with the facts that $\eta(-1)=1$ for $q\equiv 1~(\mbox{mod}~4),$ and $\eta(-1)=-1$ for $q\equiv 3~(\mbox{mod}~4).$
\end{proof}

\vskip.25in

\end{document}